\documentclass[12pt,leqno]{amsart}
\usepackage{latexsym,amsmath,amsfonts,amssymb,cite}
\usepackage{color,latexsym,amsmath,amsthm,amsfonts,amssymb,xcolor,verbatim,relsize}

 \scrollmode

\newtheorem{thm}{Theorem}[section]
\newtheorem{deff}[thm]{Definition}

\newtheorem{rem}[thm]{Remark}
\newtheorem{prop}[thm]{Proposition}

\newtheorem{ex}[thm]{Example}
\newtheorem{claim}[thm]{Claim}

\newcommand{\wt}{\widetilde}
\newcommand{\wh}{\widehat}
\newcommand{\vO}{\varOmega}

\def\N{{\mathbb N}}
\def\R{{\mathbb R}}
\def\P{{\mathbb P}}
\def\mcL{{\mathcal L}}

\def\mcP{{\mathcal P}}
\def\mcA{{\mathcal A}}

\def\mcM{\mathcal M}
\def\mcN{\mathcal N}
\def\vL{{\varLambda}}
\def\mfA{{\mathfrak A}}
\def\mfB{{\mathfrak B}}
\def\mfC{{\mathfrak C}}
\def\mfD{{\mathfrak D}}

\def\mfL{\mathfrak L}

\def\mfZ{\mathfrak Z}

\def\emp{\emptyset}

\def\mcN{\mathcal N}

\def\tr{\triangle}
\def\cd{\circledast}
\def\di{\diamond}
\def\dd{\divideontimes}

\begin{document}

\title{Existence of measurable versions\\ of stochastic processes}
\author{Kazimierz Musia\l}
\address{Institut of Mathematics, Wroc{\l}aw University,  Pl. Grunwaldzki  2/4, 50-384 Wroc{\l}aw (Poland), Orcid Id: 0000-0002-6443-2043}
\email{kazimierz.musial@math.uni.wroc.pl}

\date{\today}

\keywords{ measurable processes}

\subjclass[2020]{60G07, 28A35, 26B99}

\begin{abstract}
Let $(X, \mfA,P)$, $(Y, \mfB,Q)$ be two arbitrary probability spaces and $\P:=\{(\mfA,P_y):y\in{Y}\}$ be a regular conditional probability (rcp) on $\mfA$ with respect to $Q$. Denote by $R$  the skew product of $P$ and $Q$ determined by  $\P$ on the product $\sigma$-algebra $\mfA\otimes\mfB$ and by $\wh{R}$ its completion. I prove  that  if   $(X, \mfA,P)$  is separable   in the Fr\'{e}chet-Nikod\'{y}m pseudo-metric, then the stochastic process  $\{\xi_y:y\in{Y}\}$ has an equivalent measurable modification  if and only if it is measurable with respect to   a certain particular $\sigma$-algebra larger than $\mfA\otimes\mfB$.  The theorem is  a strong generalization of \cite[Theorem 5.5]{mms2} and \cite[Theorem 6.1]{smm},where it was proved only that a suitable class of liftings transfer a measurable process into a measurable process.  It is known that not every process possesses an equivalent measurable modification (cf. \cite[Section 19.5]{St}).
  My approach is essentially different from earlier  trials. It reverts to \cite[Theorem 3]{ta1}, where Talagrand proved existence of an equivalent  separable modification of a measurable process (in case of $R=P\times{Q}$), provided $Y$ is endowed with a separable pseudometric.
  \end{abstract}

\maketitle

\section{Preliminaries}
Let $(X,\mfA,P)$ and $(Y,\mfB,Q)$ be fixed probability spaces. The algebra on $X\times{Y}$ generated by the rectangles $A\times{B}$, where $A\in\mfA$ and $B\in\mfB$, is denoted by  $\mfA\times\mfB$. The $\sigma$-algebra generated by $\mfA\times\mfB$ is denoted by $\mfA\otimes\mfB$. If $(Z,\mfZ,T)$ is a measure space, then $(Z,\wh\mfZ,\wh{T})$ denotes its completion and $\mfZ_0:=\{A\in\mfZ:T(A)=0\}$. $\mcL^{\infty}(T)$ is the collection of bounded $T$-measurable real functions. $\mfB_{\R}$ denotes the Borel subsets of the real line $\R$.
Assume that $\{P_y:y\in{Y}\}$ is a regular conditional expectation on $\mfA$ with respect to $Q$.  The measure $R$ on $(X\times{Y},\mfA\otimes\mfB)$ defined by the formula $R(E)=\int_YP_y(E^y)\,dQ(y)$ ($E^y:=\{x\in{X}:(x,y)\in{E}\}$)  is called a skew product of $P$ and $Q$.  We will use the notation $R\in{P\cd{Q}}$ ($P\cd{Q}$ denotes the family of all measures on $\mfA\otimes\mfB$ with $P$ and $Q$ as marginal measures).

There are several papers concerning existence of an equivalent measurable  modification of a stochastic process defined on a measure space (cf. \cite{Cohn, Cohn2, Dudley, HJ, ta3}). All are investigated for the direct product of marginal measures.  In particular, Cohn \cite{Cohn} presents  an example of a process and a lifting (assuming the continuum hypothesis)  that produce a  non-measurable  modification  of the measurable process (defined in case of ordinary product measure). The construction is too complicated to describe it here. A more subtle construction is presented in \cite{ta3}, where preservation of stability under liftings is investigated.  In \cite{Cohn2}  Cohn proves that the existence of an equivalent measurable modification is equivalent to the existence of a separable (in the classical sense) measurable  modification.  But he does not describe any general situation guaranteeing the existence of a measurable modification.  But he proves in \cite[Theorem2]{Cohn2} that each stochastically continuous process has a measurable separable  modification.   Hoffmann-J{\o}rgensen in \cite{HJ}  presented a necessary and sufficient condition for the existence of equivalent measurable process in terms of two dimensional distributions of the variables $(\xi_s,\xi_t)$.  It follows that the conditions from \cite{HJ} and that of mine are equivalent in case of separable $(X,\mfA,P)$, but they look completely different. It seems that a direct proof of their equivalence may be complicated.  In \cite{ta1} Talagrand proved that if $\{\xi_y:y\in{Y}\}$ is a stochastic process  and $Y$ is endowed with a separable pseudometric (no measure on $Y$ is required), then the measurable process possesses a separable modification (produced with the help of a special lifting).

I present a necessary and sufficient condition for the existence of a measurable  modification  of a process in the situation, when  $(X,\mfA,P)$ is separable    in the Fr\'{e}chet-Nikod\'{y}m pseudo-metric and $R$ is defined by a regular conditional probability (Theorem \ref{T2}).  It is known that an rcp not always exists, but it always exists if $P$ is a compact probability (see  \cite{Pachl1}).  Thus the generalization is of two kinds. Firstly, the product measure $P\times{Q}$ is replaced by an arbitrary skew product of $P$ and $Q$ determined by a regular conditional probability.  Secondly, I prove that even if some liftings do not produce measurable  modifications of the process (in spite of satisfying the necessary measurability assumptions required by Theorem \ref{T1}), then there is  another lifting (in fact the large family of liftings) that modifies the initial process into a measurable one.

 Classically, rather the direct product measure on $X\times{Y}$  is considered, but also in that case my characterization is new.  I require only a measurability of the investigated process with respect to some larger $\sigma$-algebra. It is the first necessary and sufficient purely measure theoretical condition guaranteeing the existence of measurable  modifications of an arbitrary stochastic process defined on an arbitrary separable probability space. That is the main novelty of the presented approach. Besides I present also one necessary and one sufficient condition for the existence of measurable  modification  in case of an arbitrary    $(X,\mfA,P)$ (Theorem \ref{T1}).

\begin{deff}\label{d1} \rm The family $\P:=\{(\mfA,P_y):y\in{Y}\}$ is called a  regular conditional probability (rcp) on $\mfA$ with respect to $Q$ if  for every $A\in\mfA$ the function $y\longrightarrow{P_y(A)}$ is $\wh{Q}$-measurable and
$$
\int_BP_y(A)\,dQ(y)={R}(A\times{B})\quad\mbox{for every }A\in\mfA\;\mbox{and }B\in\mfB\,.
$$
\end{deff}
\begin{deff}\rm
Let $(X,\mfA,P)$ and $(Y,\mfB,Q)$ be arbitrary probability spaces and $\P:=\{(\mfA,P_y):y\in{Y}\} $ be a regular conditional probability on $\mfA$ with respect to $Q$. . Moreover, let  $\varXi:=\{\xi_y:y\in{Y}\}$ and  $\varTheta:=\{\theta_y:y\in{Y}\}$ be stochastic processes defined on $(X,\mfA,\P)$ (that means that each $\xi_y$ and $\theta_y$  is a random variable defined on the $P_y$-completion of $(X,\mfA,P_y)$). We say that the processes are $\P$-equivalent if $\wh{P_y}\{x\in{X}:\xi_y(x)\neq\theta_y(x)\}=0$ for every $y\in{Y}$. The process $\{\theta_y:y\in{Y}\}$ is called measurable if the function $(x,y)\longrightarrow\theta_y(x)$ is $\wh{R}$-measurable. If $\varXi$ and $\varTheta$ are $\P$-equivalent and $\varTheta$ is measurable, then $\varTheta$ is called a measurable modification of $\varXi$.
\end{deff}
	Let $M:=\{(x,y)\in{X\times{Y}}\colon \xi_y(x)\neq\theta_y(x)\}$.  One knows that in general $M\notin\mfA\wh\otimes\mfB$, what means that $R^*(M)>0$.

If $\mfC\subset\mfA$ is a $\sigma$-algebra, then we define the following collection of sets:
\[
 \mcM_{\mfC}:=\{E\subset{X\times{Y}}\colon \exists\;N\in\mfB_0\;\forall\;y\notin{N}\;\wh{P_y|_{\mfC}}(E^y)=0\}.
\]
$\mcM_{\mfC}$ is called a family of  $R$-left nil sets  (compare with \cite[3.2.2, Satz 1, Satz 2]{Haupt}) with respect to $\mfC$.
Since for each $y\in{Y}$ the $\wh{P_y|_{\mfC}}$-zero sets form a $\sigma$-ideal  in $\mcP(X)$, the elements of $\mcM_{\mfC}$   forms a   $\sigma$-ideal in $\mcP(X\times{Y})$.

If $\mfD\supset\mfC$ is a $\sigma$-algebra, then $\mfC\dd_{\mfD}\mfB:=\{W\triangle{N}\colon W\in\mfC\otimes\mfB\;\;\&\; N\in\mcM_{\mfD}\}$ .  $\mfC\dd_{\mfD}\mfB$  is in general a $\sigma$-algebra larger than $\mfC\otimes\mfB$.

If $V\in\mfA\dd_{\mfA}\mfB$, then the formula
\[
R_{\dd}(V)=\int_Y\wh{P_y}(V^y)\,d\wh{Q}(y)
\]
 defines an extension of $R$.
One can  check that if $V=W\tr{N}$ ($W\in\mfA\otimes\mfB$ and $N\in{\mcA_{\mfA}}$), then  $R_{\dd}(W\triangle{N})=R(W)$.
The function $y\to {V^y}$ is $\wh{Q}$-measurable, because the function  $y\to{W^y}$ is $\wh{Q}$-measurable.

 $R_{\dd}$ is a complete measure and $\mcM_{\mfA}$ is the $\sigma$-ideal of $R_{\dd}$-zero sets.

$\mfA\wh\otimes\mfB$ - the completion of $\mfA\otimes\mfB$ with respect to $R$ - is contained in $\mfA\dd_{\mfA}\mfB$ and $R_{\dd}$ is an extension of $\wh{R}$.

Let
\[
\mcN_{\mfC}:=\{E\subset{X\times{Y}}\colon\forall\;y\in{Y}\;\wh{P_y|_{\mfC}}(E^y)=0\}.
\]
  The new family seems to illustrate  better  equivalent relation of the processes.
One can see immediately that processes  $\varXi$ and $\varTheta$ are equivalent if and only if there exists a set $N\in\mcN_{\mfA}$ such that $\varXi=\varTheta$ on $N^c$. Thus, the $\sigma$-ideal $\mcM_{\mfA}$ is implicitly contained in the definition of the equivalence. Unfortunately, the $\sigma$-algebras defined by  $\mfC\di_{\mfC}\mfB:=\{W\triangle{N}\colon W\in\mfC\otimes\mfB\;\;\&\; N\in\mcN_{\mfC}\}$ are usually not complete with respect to $(R_{\dd})|_{\mfC\di_{\mfC}\mfB}$.  That excludes them from Theorem \ref{I}, where the target $\sigma$-algebra must be complete. The proper choice remain $\sigma$-algebras  $\mfC\dd_{\mfC}\mfB$.

The following theorem is the key result in our examination of measurability of stochastic processes. Lifting is defined in the classical way as in \cite{it}. The collection of all liftings on $(X,\mfA,P)$ is denoted by $\vL(P)$.
\begin{thm}\label{I}
Assume that ${\mathfrak A}$ contains a countably generated
$\sigma$-algebra which is dense in ${\mathfrak A}$ (in the
Fr\'{e}chet-Nikod\'{y}m pseudo-metric) with respect to $P$.  Moreover, let $\{(\mfA,P_y):y\in{Y}\}$ be an rcp on $\mfA$ with respect to $Q$.  Then for every $y\in Y$
there exists $\sigma_y\in\vL(\wh{P}_y)$  and there exists a lifting  $\pi:\mcL^{\infty}(R_{\dd})\to \mcL^{\infty}(\wh{R})\subset\mcL^{\infty}(R_{\dd})$  such that
\begin{equation}\label{f2}
[\pi(f)]^y = \sigma_y\Bigl([\pi(f)]^y\Bigr) \qquad\mbox{for all}
\quad y\in Y\quad\mbox{and}\quad f \in {\mcL}^{\infty}(R_{\dd}).
\end{equation}
\end{thm}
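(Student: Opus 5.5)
This is the skew-product analogue of the classical theorem of Musiał, Strauss and Macheras on \emph{product-respecting liftings}, which was proved for $P\times Q$ and for skew products in \cite{mms2, smm}. I would follow the same two-step scheme: first build a \emph{densities-level} object compatible with the sections, then pass to a lifting. The intended $\pi$ has the form $\pi(f)(x,y)=\sigma_y\bigl(\rho(f)^y\bigr)(x)$ for a suitable lifting $\rho$ of $R_{\dd}$, with the $\sigma_y$ obtained from a single object.

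\textbf{Step 1: the fibre liftings $\sigma_y$.} Let $\mfC_0\subset\mfA$ be the countably generated $\sigma$-algebra that is $P$-dense in $\mfA$, generated by a countable algebra $\mfC_1=\{C_n\}$. Since $\int P_y(A)\,dQ=P(A)$, density in $P$ transfers to $P_y$ for $Q$-almost every $y$. To see this, for each $A\in\mfA$ pick $C\in\mfC_0$ with $P(A\tr C)=0$; then $P_y(A\tr C)=0$ for $y\notin N_A\in\mfB_0$. The difficulty is that $N_A$ depends on $A$. I would avoid uniformity in $A$ by working with $\mfC_0$ only. Fix a lifting $\tau$ of $(X,\mfC_0,P)$. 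For each $y$, since $\mfC_0$ is countably generated, choose a lifting $\sigma_y$ of $\wh{P_y}$ by a standard separable construction: take a lower density on $\mfC_0$ determined by the countable algebra and the $P_y$-null ideal, and extend it to $\wh{P_y}$ by the Maharam–von Neumann extension. I would arrange that $\sigma_y$ agrees with the canonical density on $\mfC_1$-sets, so that $y\mapsto\sigma_y(C_n)$ is jointly controlled.

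\textbf{Step 2: the lifting $\pi$ on $R_{\dd}$.} Define a lower density $\theta$ on $\mfA\dd_{\mfA}\mfB$ by
\[
\theta(V)=\bigcup_{y\in Y}\Bigl(\sigma_y\bigl(\wh{V^y}\bigr)\times\{y\}\Bigr)\cap \bigl(X\times\varphi(\text{``$y$-good set''})\bigr),
\]
where $\varphi$ is a lifting of $\wh Q$. The key point is that for $V=W\tr N$ with $W\in\mfA\otimes\mfB$ and $N\in\mcM_{\mfA}$, $\sigma_y(V^y)=\sigma_y(W^y)$ for almost all $y$. Moreover, because $W$ is approximated by countable unions of rectangles $C_n\times B$, the set $\{(x,y):x\in\sigma_y(W^y)\}$ lies in $\mfA\wh\otimes\mfB$. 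This is where the countable generation is used: measurability of $y\mapsto\sigma_y(C)$ for $C\in\mfC_1$, followed by a monotone class argument. Then $\theta(V)\in\mfA\wh\otimes\mfB$ and $R_{\dd}(\theta(V)\tr V)=0$, and $\theta$ is a lower density. Next I would extend $\theta$ to a lifting $\pi$ with $\pi\ge\theta$ (Maharam) by the standard technique of \cite{mms2}: lift the $Q$-coordinate via $\varphi$ and refine so that each section $\pi(V)^y$ is still $\sigma_y$-invariant. Concretely, I would define $\pi(V)^y:=\sigma_y(\theta(V)^y)$ and verify multiplicativity using that $\sigma_y$ is a lifting. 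The values lie in $\mfA\wh\otimes\mfB$ by the measurability shown above, so $\pi$ maps into $\mcL^\infty(\wh R)$. Passing from sets to functions in $\mcL^\infty(R_{\dd})$ is routine. Property \eqref{f2} then holds because $\sigma_y$ is idempotent on sections.

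\textbf{Main obstacle.} The hardest point is making $\pi$ simultaneously a genuine lifting, in particular choosing consistently for \emph{every} $y$ rather than almost every $y$, while keeping $\pi(V)\in\mfA\wh\otimes\mfB$. The risk is that the family $\{\sigma_y\}$ is arbitrary on a $Q$-null set, or non-measurably chosen. I would first try to handle the exceptional null set of $y$ by exploiting completeness of $\wh R$: define $\pi(V)^y$ arbitrarily, but $\sigma_y$-invariant, on $y\in N$, since $X\times N$ is $\wh R$-null. For the remaining $y$, I would use the joint measurability from the countable algebra $\mfC_1$ together with the density of $\mfC_0$ to reduce all checks to countably many sets.
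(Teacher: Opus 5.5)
The paper's proof is a short reduction. It takes $\pi$ and $\{\sigma_y\}$ satisfying \eqref{f2} on $\mcL^{\infty}(\wh{R})$ directly from \cite[Theorem 3.6]{smm}. It then observes that every $E=W\tr N$ with $W\in\mfA\otimes\mfB$ and $N\in\mcM_{\mfA}$ is $R_{\dd}$-equivalent to $W$. So $\pi(E):=\pi(F)$, for any $F\in\mfA\wh\otimes\mfB$ with $R_{\dd}(E\tr F)=0$, is well defined because $R_{\dd}$ extends $\wh{R}$, and it inherits \eqref{f2} because the range is unchanged. You never use this reduction; you try to rebuild the hard result from scratch, and both key steps fail as written.

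First, measurability. A monotone class argument cannot carry $\wh{R}$-measurability of $\{(x,y):x\in\sigma_y(W^y)\}$ from sets built on the countable algebra $\mfC_1$ to all $W\in\mfA\otimes\mfB$, because liftings are only finitely additive. For $W_n\uparrow W$ the set $\sigma_y(W^y)\setminus\bigcup_n\sigma_y(W_n^y)$ is in general a nonempty $\wh{P_y}$-null set. The union of these null sections over $y$ is a member of $\mcM_{\mfA}$ that need not lie in $\mfA\wh\otimes\mfB$ (compare the Sierpi\'nski set in the paper's example). Prescribing $\sigma_y$ on $\mfC_1$ and extending by Maharam's theorem leaves $\sigma_y$ uncontrolled on the rest of $\mfA$. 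The examples of \cite{Cohn,Dudley} show that liftings chosen without further constraints can destroy measurability; this is why \cite{smm} works with admissibly generated liftings, which are constrained on all sets.

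Second, the lifting property at \emph{every} $y$, which you yourself leave open. The naive recipe $[\pi(V)]^y=\sigma_y(V^y)$ must be corrected on $V$-dependent null sets of $y$. For $B\in\mfB_0$ the set $X\times B$ is $R_{\dd}$-null, so $\pi(X\times B)=\emptyset$, although $\sigma_y\bigl((X\times B)^y\bigr)=X$ for $y\in B$. As $B$ runs through $\mfB_0$ these exceptional sets may cover all of $Y$, so fixing the sections on a single null set $N$ cannot work. Your concrete formula $[\pi(V)]^y:=\sigma_y\bigl([\theta(V)]^y\bigr)$ with a lower density $\theta$ runs into the same problem:
complementation requires $[\theta(V)]^y\cup[\theta(V^c)]^y$ to be $\wh{P_y}$-conull for every $y$, but a lower density guarantees this only off a $V$-dependent $\wh{Q}$-null set. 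With your cut-off by $X\times\varphi(\cdot)$, both sections are even empty outside the good set. Replacing $\theta$ by a lifting $\rho\ge\theta$ does not help by itself, since applying $\sigma_y$ requires $[\rho(V)]^y$ to be $\wh{P_y}$-measurable for every $y$, again true only off a $V$-dependent null set. What has to be produced is, for every $y$, a Boolean homomorphism $V\mapsto[\pi(V)]^y$ into the $\sigma_y$-invariant sets that is constant on $R_{\dd}$-classes, agrees with $V^y$ modulo $\wh{P_y}$ for $\wh{Q}$-almost every $y$, and yields sets in $\mfA\wh\otimes\mfB$. That is exactly the content of \cite[Theorem 3.6]{smm}, and your proposal does not supply it.
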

\begin{proof}
According to \cite[Theorem 3.6]{smm} for every $y\in Y$
there exists $\sigma_y\in\vL(\wh{P}_y)$ and there exists a lifting $\pi:\mcL^{\infty}(\wh{R})\to\mcL^{\infty}(\wh{R})$  satisfying \eqref{f2}  with $R_{\dd}$ replaced by $\wh{R}$. The liftings are constructed step-by-step on an arbitrary (but fixed) countably generated $\sigma$-algebra that is dense in $\mfA$ and then extended to the domains of $\wh{P_y}$ and onto $\mfA\wh\otimes\mfB$.  Since in the whole lifting theory axiom of choice plays an important role, the construction is not unique.

 $\mfA\wh\otimes\mfB$ is $R_{\dd}$-dense in $\mfA\dd_{\mfA}\mfB$ and so if $E\in\mfA\dd_{\mfA}\mfB$ and $F\in\mfA\wh\otimes\mfB$  are such that $R_{\dd}(E\tr{F})=0$, then we may set $\wt\pi(E):=\pi(F)$. Then $\wt\pi:\mfA\dd_{\mfA}\mfB\to\mfA\wh\otimes\mfB$ is a lifting satisfying \eqref{f2}. We denote it also by  $\pi$.
\end{proof}
\begin{rem}\rm
The thesis of Theorem \ref{I} remains valid if  the separability of $\mfA$ is replaced by the absolute continuity of every $P_y$ with respect to $P$ or by the absolute continuity of $R$ with respect to $P\times{Q}$ (see \cite{mms1}).  But each of the last two assumptions (that are in some sense almost equivalent) would be too restrictive for our main result. Theorem \ref{T2} does not require any absolute continuity.  The assumption of absolute continuity would exclude from our investigation a large class of processes with not absolutely continuous rcp.
To the best of my knowledge no necessary and sufficient condition guaranteeing validity of \eqref{f2} is known.
\end{rem}

\section{Measurability of processes}\label{A}
There are known several examples of stochastic processes without equivalent measurable  modifications and several conditions guaranteeing existence of measurable modifications, all in case when $R=P\times{Q}$ (cf. \cite{Cohn, Dudley, HJ,ta3}). Below I suggest a different purely measure theoretical approach, based on existence of  special liftings in product measure spaces endowed with a regular conditional probability.
\begin{thm}\label{T1}
Let $\varXi:=\{\xi_y:y\in{Y}\}$ be a stochastic process defined on $(X,\mfA,\P)$, where $\P=\{(\mfA,P_y):y\in{Y}\}$ is an rcp on $\mfA$ with respect to $Q$. Moreover, let $R\in{P}\cd{Q}$ be determined by $\P$. Then,  the following statements hold true:
\begin{itemize}
\item[(M1)]
If the process $\{\xi_y:y\in{Y}\}$  possesses a $\P$-equivalent measurable modification, then there exists a countably generated $\sigma$-algebra $\mfC\subset\mfA$ such that    $\varXi$ is measurable with respect to $\mfC\dd_{\mfA}\mfB$;
\item[(M2)]
If there exists a $\sigma$-algebra $\mfC\subset\mfA$ such that  $\mfC$ is separable  in the Fr\'{e}chet-Nikod\'{y}m pseudo-metric  and  $\varXi$ is measurable with respect to $\mfC\dd_{\mfC}\mfB$, then  the process $\{\xi_y:y\in{Y}\}$  possesses a $\P$-equivalent measurable modification.
\end{itemize}
If the process  $\varXi$ is bounded and the measurable modification $\{\theta_y:y\in{Y}\}$ is built with the help of the liftings $\pi$ and $\{\sigma_y:y\in{Y}\}$ taken from Theorem \ref{I}, then for every $y\in{Y}$ the equality $\theta_y=\sigma_y(\xi_y)$ is fulfilled.
\end{thm}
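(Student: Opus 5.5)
For (M1) I would start from a $\P$-equivalent measurable modification $\varTheta=\{\theta_y\}$. The function $(x,y)\mapsto\theta_y(x)$ is $\wh R$-measurable, so I replace it by an $\mfA\otimes\mfB$-measurable $g$ that agrees with it off an $R$-null set $Z$. Since $g$ is measurable for the product $\sigma$-algebra, it depends on only countably many rectangles: there are countably many $A_n\in\mfA$ and $B_n\in\mfB$ with $g$ measurable with respect to $\sigma(\{A_n\times B_n\})$. I let $\mfC:=\sigma(\{A_n\})$, which is countably generated, and then $g$ is $\mfC\otimes\mfB$-measurable. Next, $Z\in\mcM_{\mfA}$: from $R(Z)=\int_Y P_y(Z^y)\,dQ=0$ we get $P_y(Z^y)=0$ for $Q$-almost every $y$. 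The set $M=\{\xi_y\neq\theta_y\}$ lies in $\mcN_{\mfA}\subset\mcM_{\mfA}$ by $\P$-equivalence. So $\xi$ coincides with the $\mfC\otimes\mfB$-measurable $g$ off $Z\cup M\in\mcM_{\mfA}$. Because $\mcM_{\mfA}$ is a $\sigma$-ideal, for every Borel $D$ the set $\xi^{-1}(D)$ differs from $g^{-1}(D)$ by a subset of $Z\cup M$, and so lies in $\mfC\dd_{\mfA}\mfB$.

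For (M2) I first reduce to bounded processes by composing with $\arctan$, which is a Borel isomorphism onto $(-\pi/2,\pi/2)$. I then apply Theorem \ref{I} to the space $(X,\mfC,P|_{\mfC})$ with the rcp $\{(\mfC,P_y|_{\mfC})\}$, which is again an rcp for the skew product $R|_{\mfC\otimes\mfB}$. The separability of $\mfC$ supplies the dense countably generated subalgebra. This gives liftings $\sigma_y\in\vL(\wh{P_y|_{\mfC}})$ and $\pi$ on $\mcL^\infty$ of the measure $R_{\dd}$ built over $\mfC\dd_{\mfC}\mfB$, with values in the $\mfC\wh\otimes\mfB$-measurable functions and satisfying \eqref{f2}. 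Since $\xi$ is bounded and measurable with respect to $\mfC\dd_{\mfC}\mfB$, I set $\theta:=\pi(\xi)$. This function is $\mfC\wh\otimes\mfB$-measurable, and hence $\wh R$-measurable because $\mfC\wh\otimes\mfB\subset\mfA\wh\otimes\mfB$.

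Equivalence is the key step, and I expect it to be the main obstacle. Since $\pi$ is a lifting, $\pi(\xi)=\xi$ off a set $N\in\mcM_{\mfC}$, so $\theta^y=\xi_y$ $\wh{P_y|_{\mfC}}$-a.e. for $y$ outside some $Q$-null set $B_0$. For such $y$, \eqref{f2} and the lifting property give
\[
\theta_y=\sigma_y(\theta_y)=\sigma_y(\xi_y).
\]
The exceptional $y\in B_0$ still have to be handled, because equivalence is required for every $y$. There I redefine $\theta_y:=\sigma_y(\xi_y)$, or simply $\theta_y:=\xi_y$. This changes $\theta$ only on $X\times B_0$, which is $R$-null, so $\wh R$-measurability is preserved. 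For every $y$ we then have $\theta_y=\xi_y$ $\wh{P_y}$-a.e. The point to verify here is that $\wh{P_y|_{\mfC}}$-null sets are $\wh{P_y}$-null, which holds because the completion of the restriction sits inside the completion.

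The final assertion follows from the same computation. For bounded $\xi$, with $\pi$ and $\sigma_y$ taken from Theorem \ref{I} applied to $\mfA$ (when $\mfC=\mfA$), the relation $\theta_y=\sigma_y(\theta_y)=\sigma_y(\xi_y)$ holds for all $y$ at which $\theta_y=\xi_y$ almost everywhere. After the null-set adjustment above, this is every $y$.
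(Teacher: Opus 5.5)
Your proof follows the paper's outline. Theorem~\ref{I} gives $\pi$ and $\sigma_y$, and $\pi(\xi)$ is $\wh{R}$-measurable. Then \eqref{f2} gives $[\pi(\xi)]^y=\sigma_y(\xi_y)$ off a $Q$-null set of $y$'s, and one repairs on $X\times B_0$. Where you deviate, you are mostly more careful than the paper.

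In (M1) the paper asserts $\varTheta^{-1}(\mfB_{\R})\subset\mfC\otimes\mfB$ outright, but this need not hold for a merely $\wh{R}$-measurable $\varTheta$. Your passage to an $\mfA\otimes\mfB$-measurable $g$, absorbing the $R$-null set into $\mcM_{\mfA}$, is exactly the missing step.

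In (M2) you treat a general separable $\mfC$ explicitly rather than assuming $\mfC=\mfA$. You also replace the paper's partition into $\vO_n=\{n\le|\varXi|<n+1\}$, with gluing of the $\pi(\varXi_n)$, by an $\arctan$ reduction, which is simpler. One detail is missing there: $\pi(\arctan\circ\xi)$ may attain the endpoints of the closed interval. Before applying $\tan$ you must reset it (say to $0$) on that $\wh{R}$-measurable set, whose $y$-sections are all $\wh{P_y}$-null.

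For the final assertion you must use the option $\theta_y:=\sigma_y(\xi_y)$ on $B_0$. This is what the paper does, setting $\theta_y:=\sigma_y(\xi_y)$ for every $y$ from the start. With $\theta_y:=\xi_y$ the identity $\theta_y=\sigma_y(\xi_y)$ fails in general. On $B_0$ the modified $\theta$ is no longer $\pi(\xi)$, so \eqref{f2} says nothing about it, and your argument via $\theta_y=\sigma_y(\theta_y)$ does not apply there. For a general $\mfC$, by contrast, $\theta_y:=\xi_y$ is the safe option. There $\xi_y$ need not be $\wh{P_y|_{\mfC}}$-measurable for $y\in B_0$, so $\sigma_y(\xi_y)$ may be undefined.

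Finally, equivalence is not the obstacle you expected. It uses only that $\pi(\xi)=\xi$ off a set in $\mcM_{\mfC}$, so any $\mfC\otimes\mfB$-measurable representative of $\xi$ modulo $\mcM_{\mfC}$ would serve. The lifting, and with it the separability of $\mfC$, is needed only for the identification $\theta_y=\sigma_y(\xi_y)$.
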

\begin{proof}
$(M1)$ \quad Let $\varTheta(x,y):=\theta_y(x)$ be $\wh{R}$-measurable and equivalent to $\{\xi_y:y\in{Y}\}$.  Define the set $N$ by the equality
\[
N:=\{(x,y)\in{X\times{Y}}\colon \theta_y(x)\neq\xi_y(x)\}.
\]
 By the assumption $\wh{P_y}(N^y)=0$ for every $y\in{Y}$. Thus $N\in\mcM$.  If $B\subset\R$ is a Borel set, then
\[
\varXi^{-1}(B)=\varXi^{-1}(B)\cap{N^c}\cup\varXi^{-1}(B)\cap{N}=\varTheta^{-1}(B)\cap{N^c}\cup\varXi^{-1}(B)\cap{N}
\]
 and consequently $\varXi^{-1}(B)\triangle\varTheta^{-1}(B)\subset{N}\in\mcM$. That  means that $\{\xi_y:y\in{Y}\}$ is measurable with respect to $\sigma(\varTheta^{-1}(\mfB_{\R})\cup\mcM)$. But $\varTheta^{-1}(\mfB_{\R})$ is a countably generated $\sigma$-algebra what immediately yields the existence of a countably generated $\sigma$-algebra $\mfC\subset\mfA$ satisfying the inclusion  $\varTheta^{-1}(\mfB_{\R})\subset \mfC\otimes\mfB$.  Clearly, $\varXi$ is measurable with respect to $\mfC\dd_{\mfA}\mfB$.

$(M2)$ \quad    Assume at the beginning that the process defined by $\varXi(x,y):=\xi_y(x)$ is bounded, i.e. there exists $\alpha\in(0,+\infty)$ such that $|\xi_y(x)|\leq\alpha$, for every $(x,y)\in{X}\times{Y}$. For simplicity of the notation assume also that $\mfC=\mfA$.

According to Theorem \ref{I} there exist a lifting $\pi:\mcL^{\infty}(R_{\dd})\to \mcL^{\infty}(\wh{R})$ and liftings $\{\sigma_y\in\vL(\wh{P_y}):y\in{Y}\}$ such that $[\pi(f)]^y = \sigma_y\bigl([\pi(f)]^y\bigr)$ for every $f \in {\mcL}^{\infty}(R_{\dd})$  and every $y\in Y$.

Let $\varXi$ be $R_{\dd}$-measurable. $\varXi:X\times{Y}\to\R$ is a function of two variables. In virtue of Theorem \ref{I}  the function $\pi(\varXi)$ is $\wh{R}$-measurable and
\[
\sigma_y\bigl([\pi(\varXi)]^y\bigr)=[\pi(\varXi)]^y \quad\mbox{for every }y\in{Y}.
\]
 It follows from the elementary properties of lifting  that there exists a set $N_1\in\mfB_0$ such that  for every $y\notin{N_1}$
\[
\sigma_y\bigl([\pi(\varXi)]^y\bigr)=[\pi(\varXi)]^y=\sigma_y(\varXi^y)=\sigma_y(\xi_y)\quad\wh{P_y}-a.e.
\]
It follows that $\varTheta$ defined by $\varTheta(\cdot,y):=\sigma_y(\xi_y) $ for every $y\in{Y}$, is $\wh{R}$-measurable and $\P$-equivalent to $\varXi$. More precisely, $\varTheta|_{X\times{N_1^c}}=\pi|_{X\times{N_1^c}}$.

If the process $\varXi$ is unbounded and $\varXi$ is $\mfA\dd_{\mfA}\mfB$-measurable, then one can decompose the space $X\times{Y}$ into at most countably many pairwise disjoint sets $\vO_n\in\mfA\dd_{\mfA}\mfB$ such that $\varXi$ is bounded on each $\vO_n,\,n\in\N$. One may set for instance $\vO_n:=\{(x,y):n\leq|\varXi(x,y)|<{n+1}\}$. Let for each $n\in\{0,1,2,\ldots\}$
\[
\varXi_n(x,y):=\varXi(x,y)\,\chi_{\vO_n}(x,y).
\]
 On the strength of  the proof for bounded processes, for each $n$ there exists a $\wh{R}$-measurable
\[
\varTheta_n:=\pi\bigl(\varXi_n\bigr)=\pi\bigl(\varXi_n\bigr)\chi_{\pi(\vO_n)}
\]
 that is equivalent to $\varXi_n$.  Since the supports of the functions $\varTheta_n$ are pairwise disjoint (if $n\neq{m}$, then $\pi(\vO_n)\cap\pi(\vO_m)=\emp$) and $\wh{R}\bigl(\bigcap_n\pi(\vO_n^c)\bigr)=0$, we may define the process $\varTheta$ setting
\[
\varTheta(x,y) = \left\{ \begin{array}{ll}
\varTheta_n(x,y)&{\rm\; if} \ \  (x,y)\in\pi(\vO_n)\\
&\\
\xi_y(x)&{\rm\; if} \ \  (x,y)\in\bigcap_n\pi(\vO_n^c)\ .   \\
\end{array}\right.
\]
Let  $N:=\{y:\wh{P_y}\bigl(\bigl[\bigcap_n\pi(\vO_n^c)\bigr]^y\bigr)=0\}$. Then $\wh{Q}(N)=1$ and   $\varTheta^y\equiv_{\wh{P_y}}\xi_y$ for $y\in{N}$. If $y\notin{N}$ and $A:=\bigl[\bigcap_n\pi(\vO_n^c)\bigr]^y$, then $\varTheta(x,y)=\varTheta_n(x,y)$ if $x\notin{A}$ and $ (x,y)\in\pi(\vO_n)$ and,  $\varTheta(x,y)=\xi_y(x)$ in case of $x\in{A}$ and $  (x,y)\in\pi(\vO_n)$.  All measurability requirements and consistency properties follow from the analogical properties of each $\varTheta_n$.
\end{proof}
\begin{thm}\label{T2}
Let $\varXi:=\{\xi_y:y\in{Y}\}$ be a stochastic process defined on $(X,\mfA,\P)$, where $\P=\{(\mfA,P_y):y\in{Y}\}$ is an rcp on $\mfA$ with respect to $Q$. Moreover, let $R\in{P}\cd{Q}$ be determined by $\P$. If $(X,\mfA,P)$ is separable in the Fr\'{e}chet-Nikod\'{y}m pseudo-metric, then  the process  $\{\xi_y:y\in{Y}\}$ has an equivalent measurable modification if and only if it is measurable with respect to  $\mfA\dd_{\mfA}\mfB$.
\end{thm}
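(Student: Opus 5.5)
The plan is to derive Theorem \ref{T2} directly from the two parts of Theorem \ref{T1}, using the separability of $(X,\mfA,P)$ to close the gap between the $\sigma$-algebras that appear in (M1) and (M2).

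For necessity, I will use the argument of (M1). Assume $\varTheta$ is an $\wh{R}$-measurable modification that is $\P$-equivalent to $\varXi$. Let $N:=\{(x,y)\colon\theta_y(x)\neq\xi_y(x)\}$. By equivalence, $\wh{P_y}(N^y)=0$ for every $y$, so $N\in\mcN_{\mfA}\subset\mcM_{\mfA}$. For every Borel set $B$ we have
\[
\varXi^{-1}(B)\,\triangle\,\varTheta^{-1}(B)\subset N .
\]
Since $\varTheta^{-1}(B)\in\mfA\wh\otimes\mfB\subset\mfA\dd_{\mfA}\mfB$ and $\mfA\dd_{\mfA}\mfB$ is closed under symmetric differences with sets of $\mcM_{\mfA}$, we get $\varXi^{-1}(B)\in\mfA\dd_{\mfA}\mfB$. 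To justify the closure I will write $\varTheta^{-1}(B)=W\triangle M$ with $W\in\mfA\otimes\mfB$ and $M$ $R$-null. Then $M\in\mcM_{\mfA}$, because $R(M')=\int\wh{P_y}(M'^y)\,dQ=0$ for a null superset $M'\in\mfA\otimes\mfB$ of $M$. This direction needs no separability; it is just (M1) with $\mfC=\mfA$.

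For sufficiency, I want to apply (M2) with $\mfC=\mfA$. The hypothesis of Theorem \ref{T2} gives exactly that $\mfA$ is separable in the Fr\'echet--Nikod\'ym pseudo-metric, and that $\varXi$ is $\mfA\dd_{\mfA}\mfB$-measurable, which is precisely the measurability required in (M2) when $\mfC=\mfA$. One point has to be checked here. Theorem \ref{I}, on which (M2) rests, assumes that $\mfA$ contains a countably generated $\sigma$-algebra that is $P$-dense in $\mfA$. I will derive this from separability: take a countable $P$-dense family $\{A_n\}\subset\mfA$ and let $\mfC_0:=\sigma(\{A_n\})$. Then $\mfC_0$ is countably generated and dense in $\mfA$, so Theorem \ref{I} applies. (M2) then yields a $\P$-equivalent $\wh{R}$-measurable modification; in the bounded case it is $\theta_y=\sigma_y(\xi_y)$.

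The main obstacle I expect is this last check, namely that density with respect to $P$ alone is what Theorem \ref{I} requires, even though the $P_y$ need not be absolutely continuous with respect to $P$. I will rely on Theorem \ref{I} exactly as stated, since its hypothesis refers only to density with respect to $P$. I will also verify that the unbounded reduction in (M2) works with sets $\vO_n\in\mfA\dd_{\mfA}\mfB$. With these points settled, both directions follow and the equivalence is proved.
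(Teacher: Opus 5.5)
Your proposal is correct and takes the paper's route: Theorem~\ref{T2} is Theorem~\ref{T1} with $\mfC=\mfA$. Necessity is the argument of (M1), using $\mfC\dd_{\mfA}\mfB\subset\mfA\dd_{\mfA}\mfB$ or, as you do, running it directly with $\mfA$; sufficiency is (M2), with the separability of $(X,\mfA,P)$ supplying the countably generated dense sub-$\sigma$-algebra that Theorem~\ref{I} requires. Your direct treatment of necessity, including the check that $R$-null sets lie in $\mcM_{\mfA}$ so that $\mfA\wh\otimes\mfB\subset\mfA\dd_{\mfA}\mfB$, is a slightly cleaner version of (M1) that avoids the unneeded detour through a countably generated $\mfC$.
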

\begin{rem} \rm If the process  $\{\xi_y:y\in{Y}\}$ has an equivalent measurable modification, then the modification is  separable in the Fr\'{e}chet-Nikod\'{y}m pseudo-metric (in virtue of $(M1)$). But that does not mean the separability in the classical sense (cf. \cite{ta1}). It follows that the separability condition -- as it is formulated in (M1) -- is  indispensable.  However, it is probably not sufficient in case of non-separable measure spaces. Thus, some kind of separability  reflects a deeper property of the class of processes admitting measurable modifications. But accidentally it is also a technical assumption required by the lifting argument (without the separability assumption Theorem \ref{I} may be false).
\end{rem}
\begin{rem} \rm
At this place it is important to notice that the liftings $\sigma_y$ are not quite arbitrary, they are chosen in a very special way. These   are so called admissibly generated liftings (for details see \cite{smm}). Without such an assumption the lifting modification of a measurable process may produce a non measurable process (see \cite{Cohn,Dudley}).
\end{rem}

 In the most classical version $R=P\times{Q}$ and  processes  $\{\xi_y:y\in{Y}\}$ and  $\{\theta_y:y\in{Y}\}$ are called equivalent if $P(\{\xi_y\neq\theta_y\})=0$ for every $y\in{Y}$.  In that case the rcp is almost uniquely defined ($P_y=P$ for $Q$-a.e. $y\in{Y}$) and one may assume that   $P_y=P$ for every $y\in{Y}$. Hence  $\mcM_{\mfC}=\{E\subset{X}\times{Y}:\exists\;N\in\mfB_0\;\forall\;y\notin{N}\;\wh{P|_{\mfC}}(E^y)=0\}$.

In the language of functions of two variables, Theorem \ref{T2} can be formulated in the following way:

\begin{claim}  Let   $(X,\mfA,P)$ be separable in the Fr\'{e}chet-Nikod\'{y}m pseudo-metric. A function $f:X\times{Y}\to\R$ with measurable $Y$-sections has an equivalent $\mfA\wh\otimes\mfB$-measurable  modification if and only if  $f$ is measurable with respect to $\mfC\dd_{\mfA}\mfB$.
\end{claim}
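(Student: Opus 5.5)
The claim is the specialization of Theorem \ref{T2} to $R=P\times Q$ with $P_y=P$ for all $y$, so I will derive it from Theorems \ref{T1} and \ref{I} rather than argue from scratch. The first step is the setting. When $R=P\times Q$ and $P_y\equiv P$, the family $\{(\mfA,P):y\in Y\}$ is trivially an rcp on $\mfA$ with respect to $Q$, since $\int_B P(A)\,dQ=P(A)Q(B)$. Equivalence of two functions $f,g$ then means $\wh P\{f(\cdot,y)\neq g(\cdot,y)\}=0$ for every $y$. A measurable modification is an $\mfA\wh\otimes\mfB$-measurable $g$ equivalent to $f$. The hypothesis that $f$ has measurable $Y$-sections is what makes $f$ a stochastic process in the sense of the Definition, with $\xi_y:=f(\cdot,y)$. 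I will read $\mfC$ in the statement as any sub-$\sigma$-algebra $\mfC\subset\mfA$ that is countably generated (equivalently, by separability of $P$, one may take $\mfC=\mfA$ up to density).

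For necessity, suppose $g$ is an $\mfA\wh\otimes\mfB$-measurable modification of $f$. I apply (M1) of Theorem \ref{T1} verbatim. The set $N=\{f\neq g\}$ has $P$-null sections for every $y$, so $N\in\mcN_{\mfA}\subset\mcM_{\mfA}$. Since $g^{-1}(\mfB_{\R})$ is countably generated, it lies in $\mfC\otimes\mfB$ modulo an $R$-null set for some countably generated $\mfC\subset\mfA$. I will absorb that null set into $\mcM_{\mfA}$, using that $\wh R$-null sets have $Q$-a.e. $P$-null sections by Fubini. Hence $f^{-1}(B)\in\mfC\dd_{\mfA}\mfB$ for every Borel $B$.

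For sufficiency, suppose $f$ is $\mfC\dd_{\mfA}\mfB$-measurable. Because $\mfC\subset\mfA$, we have $\mfC\dd_{\mfA}\mfB\subset\mfA\dd_{\mfA}\mfB$. Since $(X,\mfA,P)$ is separable, Theorem \ref{T2} in its "if" direction, which is (M2) with $\mfC=\mfA$, applies directly. Concretely, for bounded $f$, Theorem \ref{I} provides $\pi$ and $\sigma_y$, and I set $g(\cdot,y):=\sigma_y(f(\cdot,y))$. This agrees with $\pi(f)$ off $X\times N_1$ with $N_1\in\mfB_0$, so $g$ is $\wh{R}$-measurable, and $g(\cdot,y)=f(\cdot,y)$ $P$-a.e. for every $y$ by the lifting property. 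Here the measurability of each section $f(\cdot,y)$ is exactly what makes $\sigma_y(f(\cdot,y))$ defined for every $y$, including $y\in N_1$. For unbounded $f$, I decompose $X\times Y$ into the sets $\vO_n=\{n\le|f|<n+1\}$ and glue, as in the proof of Theorem \ref{T1}.

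The main obstacle is in the necessity direction: showing that $g^{-1}(\mfB_{\R})$ lands in $\mfC\dd_{\mfA}\mfB$ for a countably generated $\mfC$, and not merely in $\mfA\wh\otimes\mfB$. Each generator $g^{-1}((-\infty,q))$, $q\in\mathbb{Q}$, is $W_q\tr Z_q$ with $W_q\in\mfA\otimes\mfB$ and $\wh R(Z_q)=0$. Each $W_q$ lies in $\mfC_q\otimes\mfB$ for a countably generated $\mfC_q$, since every set in a product $\sigma$-algebra is generated by countably many rectangles. I then take $\mfC=\sigma(\bigcup_q\mfC_q)$. Finally I check that each $Z_q\in\mcM_{\mfA}$, using Fubini for $P\times Q$. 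This bookkeeping is the only nontrivial verification.
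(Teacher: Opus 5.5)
Your proposal is correct and follows the paper's route. The Claim is Theorem~\ref{T2} specialized to $P_y\equiv P$: necessity comes from (M1), and sufficiency from (M2) applied with $\mfC=\mfA$ via the inclusion $\mfC\dd_{\mfA}\mfB\subset\mfA\dd_{\mfA}\mfB$. Your explicit absorption of the $\wh{R}$-null sets $Z_q$ into $\mcM_{\mfA}$ is more careful than the paper's proof of (M1), which asserts $\varTheta^{-1}(\mfB_{\R})\subset\mfC\otimes\mfB$ even though $\varTheta$ is only $\wh{R}$-measurable.
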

\begin{ex}\rm
Let $(X,\mfA,P)=(Y,\mfB,Q)=([0,1],\mfL\cap[0,1],\lambda)$, where $\lambda$ is the Lebesgue measure on the $\sigma$-algebra $\mfL$ of Lebesque measurable sets. Moreover, let $V\subset{Y}$ be a Vitali set. For each $y\in{V}$ choose a non-trivial $\lambda$-zero set $A_y\in\mfB_{\R}$. Define the set $E\subset[0,1]^2$ in the following way: if $y\in{V}$, then $E^y:=A_y$ and if $y\notin{V}$, then $E^y=\emp$.  The function  $\chi_{{}_E}$ is $\mfL\dd\mfL$ measurable but not necessarily  plane-measurable.  It depends on the position of the sets $A_y$. The zero function is the measurable function that is equivalent to $\chi_{{}_E}$.  The concrete non-trivial example of $E$ can be found in\cite{S}, where -- under the continuum  hypothesis --
Sierpi\'nski  constructed a set $E\subset [0,1]\times[0,1]$  with
the following properties: \vskip5pt (1) all vertical sections $E^c_x$ are at most countable,

(2) all horizontal sections $E^y$ are at most countable.

$E\in\mfL\dd_{\mfL}\mfL$ (more precisely $E\in\mcM_{\mfL}$) but $E\notin \mfL\wh\otimes\mfL$.

In a more general case,  assume that there exists  a function $h:X\to{Y}$ such that its graph $G$ is not in $\mfA\wh\otimes\mfB$.  Then $G\in\mfA\dd_{\mfA}\mfB$. If $\{\xi_y:y\in{Y}\}$ is a measurable process, them $\varTheta$ defined by $\varTheta(x,y)=\xi_y(x)$ for $(x,y)\notin{G}$ and  $\varTheta(x,y)=0$ for $(x,y)\in{G}$ is the process that is  $\mfA\dd_{\mfA}\mfB$-measurable and admits a measurable modification.
\end{ex}
\begin{ex}\rm
Cohn \cite{Cohn}, assuming the continuum hypothesis, constructed a measurable process $\varXi$  such that for some lifting $\rho$ on the completion of $(X,\mfA,P)$ the process $\varTheta$ defined by $\varTheta(x,y):=\rho[\varXi(\cdot,y)](x)$ is not measurable. The construction is too complicated even for a short description. It is not based on any product lifting. The process $\varTheta$ is a non-trivial example of a non-measurable process possessing a measurable modification. It follows from the condition (M1) of Theorem \ref{T1} that $\varTheta$ is measurable with respect to some $\mfC\dd_{\mfA}\mfB$ with separable $\mfC$.
\end{ex}
\begin{ex}  \rm
Applying Theorem \ref{T2} we present a simple example of a stochastic process that has no measurable modification. The example is copied from \cite[Section 19.5]{St}. Let $(X,\mfA,P)$ be arbitrary separable probability space, $(Y,\mfB,P)=([0,1],\mfL,\lambda)$ and $R=P\times\lambda$.  Let $\varXi$ be the process defined by $\varXi(x,y)=\xi_y(x)$, where the random variables $\{\xi_y:y\in{Y}\}$ are mutually independent, bounded and such that
\begin{equation}\label{E1}
\mathbb{E}(\xi_y)=0\quad\mbox{and  }\quad\mathbb{E}(\xi_y^2)=1\quad\mbox{for every }y\in{Y}.
\end{equation}
 Suppose that $\varXi$ is $\mfA\dd\mfL$-measurable. Then, according to Theorem \ref{T2} there exists a lifting $\pi:\mcL^{\infty}(R_{\dd})\to\mcL^{\infty}(\wh{R})$ such that $\varTheta$ defined by $\varTheta:=\pi(\varXi)$ is a measurable modification of $\varXi$. But the new variables $\theta:=\sigma_y(\xi_y)$ are also mutually independent and satisfy \eqref{E1} with $\xi_y$ replaced by $\theta_y$. Applying the Fubini theorem we obtain
for each closed interval $I\subset[0,1]$ with rational endpoints
\[
{\mathbb E}\left\{\left(\int_I\theta_y(x)\,dy\right)^2\right\}={\mathbb E}\left\{\int_I\int_I\theta_y(x)\theta_z(x)\,dy\,dz\right\}=\int_I\int_I{\mathbb E}(\theta_y\theta_z)\,dy\,dz=0.
\]
That means that for each $x\notin{N_I}\in\mfA_0$ we have $\int_I\theta_y(x)\,dy=0$.  If $N=\bigcup_IN_I$, then $P(N)=0$ and $\int_J\theta_y(x)\,dy=0$ for every interval $J\subset[0,1]$ and every $x\notin{N}$. That means that for each $x\notin{N}$ we have $\theta_y(x)=0\;\;\lambda$-a.e.  Applying the Fubini theorem, we have
\[
\int_X\int_0^1\theta_y(x)^2\,dP(x)\,dy=0.
\]
On the other hand
\[
\int_X\int_0^1\theta_y(x)^2\,dP(x)\,dy=\int_0^1{\mathbb E}(\theta_y^2)\,dy=1.
\]
Thus, $\varTheta$ is not measurable. It follows from Theorem \ref{T2} that $\varXi$ is not measurable with respect to $\mfA\dd_{\mfA}\mfB$.
\end{ex}
\section{Iterated integrals.}
In case of the direct product the coordinate measure spaces and non-measurable process applying the Fubini theorem is impossible. Instead, following \cite{BM},  I remind a generalization of the Fubini theorem covering the case of separately integrable functions.

In case of the direct product the coordinate measure spaces are equally important. From that point of view it is suitable to introduce the $\sigma$-ideals
\[
 \mcM_{\mfA}^{\perp}:=\{E\subset{X\times{Y}}\colon \exists\;N\in\mfA_0\;\forall\;x\notin{N}\;\wh{Q}(E_x)=0\},
\]
\begin{equation*}
\mcM^{\bigstar}:=\mcM_{\mfA}\cap{\mcM_{\mfA}^{\perp}}\\
=\left\{ E\subset {X\times{Y}}\,\left|\
\begin{aligned}
&
\exists\;N\in\mfA_0\;\exists\;M\in\mfB_0\\[-1mm]
& \forall\;x\notin{N}\;\wh{Q}(E_x)=0\;\forall\;y\notin{M}\;\wh{P}(E^y)=0
\end{aligned}
\right.\right\}\,.
\end{equation*}
and the $\sigma$-algebras $\mfA\bigstar\mfB:=\sigma((\mfA\otimes\mfB)\cup{M^{\bigstar}})$ and $\mfA\dd^{\perp}\mfB:=\sigma\bigl((\mfA\otimes\mfB)\cup\mcM^{\perp}\bigr)$. The $\sigma$-ideal $\mcM^{\bigstar}$ was introduced in \cite{BM} and was called the $\sigma$-ideal of nil sets.  Let  $R^{\dd}:\mfA\dd^{\perp}\mfB\to[0,1]$ be defined by $R^{\dd}(F)=\int_X\wh{Q}(F_x)\,d\wh{P}(x)$. If $F=E\tr{M}$ with $E\in\mfA\wh\otimes\mfB$ and $M\in\mcM^{\bigstar}$, then
\begin{align*}
&R_{\dd}(F)=\int_Y\wh{P}(F^y)\,d\wh{Q}(y)=\int_Y\wh{P}(E^y)\,d\wh{Q}(y)=(P\wh\otimes{Q})(E)\\
&=\int_X\wh{Q}(E_x)\,d\wh{P}(x)=\int_X\wh{Q}(F_x)\,d\wh{P}(x)=R^{\dd}(E)
\end{align*}
It follows that the measure $P\wh\times{Q}$ can be extended to $\mfA\bigstar\mfB$ by setting $R_{\bigstar}(F)=(P\wh\times{Q})(E)$. Moreover,
$R_{\bigstar}(F)=R_{\dd}(F)=R^{\dd}(F)$ for every $F\in\mfA\bigstar\mfB$. The above equalities yield immediately the following interesting generalization of the Fubini theorem, that can be found in \cite{BM} but now it seems to be forgotten. Let us say that $f$ is nearly separately measurable (integrable) if $f^y$ is $\mfA$-measurable ($P$-integrable) for $Q$-a.e. $y\in{Y}$ and $f_x$ is $\mfB$-measurable ($Q$-integrable) for $P$-a.e. $x\in{X}$.
\begin{prop}\label{p1}
Let $(X,\mfA,P)$,  $(Y,\mfB,Q)$ be complete probability spaces and $f:X\times{Y}\to\R$ be a function.
If $f$ is $R_{\bigstar}$-integrable or non-negative, then it is nearly separately integrable and
\begin{align*}
&\int_{X\times{Y}}f\,dR_{\bigstar}=\notag\\
&=\int_Y\Bigl(\int_Xf(x,y)\,dP(x)\Bigr)\,dQ(y)=\int_X\Bigl(\int_Yf(x,y)\,dQ(y)\Bigr)\,dP(x)\,.
\end{align*}
\end{prop}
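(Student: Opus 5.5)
The plan is to reduce Proposition \ref{p1} to the classical Fubini--Tonelli theorem for the completed product measure $P\wh\times{Q}$ on $\mfA\wh\otimes\mfB$. Everything needed is already in place: every $F\in\mfA\bigstar\mfB$ has the form $F=E\tr{M}$ with $E\in\mfA\wh\otimes\mfB$ and $M\in\mcM^{\bigstar}$, and $R_{\bigstar}(F)=(P\wh\times{Q})(E)$. I would start with indicator functions. For $F=E\tr{M}$, choose $N\in\mfA_0$ and $M_0\in\mfB_0$ witnessing $M\in\mcM^{\bigstar}$. For $y\notin{M_0}$ we have $F^y\tr{E^y}\subset{M^y}$ with $\wh{P}(M^y)=0$. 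Since $P$ is complete, $F^y$ is then $\mfA$-measurable and $P(F^y)=P(E^y)$ whenever $E^y$ is measurable, which by classical Fubini on the completion holds for $Q$-a.e. $y$. The symmetric statement holds for $F_x$ with $x\notin{N}$. Applying classical Fubini to $E$ then gives both iterated integrals of $\chi_F$ equal to $(P\wh\times{Q})(E)=R_{\bigstar}(F)$. This is exactly the chain of equalities displayed just before the proposition.

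Next I would extend to general functions in the standard way. By linearity the statement passes to nonnegative $\mfA\bigstar\mfB$-simple functions, since a finite union of exceptional null sets is still null. For a nonnegative $\mfA\bigstar\mfB$-measurable $f$, take simple $0\le{s_n}\uparrow{f}$. Collect the countably many exceptional sets into a single $N\in\mfA_0$ and a single $M_0\in\mfB_0$. Outside these, the sections $f^y$ and $f_x$ are measurable as monotone limits, and the monotone convergence theorem, applied three times (on $X\times{Y}$, on the inner integrals, and on the outer integrals), gives the two iterated-integral formulas. In particular the inner integrals $y\mapsto\int_Xf(x,y)\,dP(x)$ and $x\mapsto\int_Yf(x,y)\,dQ(y)$ are measurable, because they are a.e. limits of measurable functions and the spaces are complete.

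For $R_{\bigstar}$-integrable $f$, I would split $f=f^+-f^-$. The nonnegative case gives $\int_Y\int_Xf^{\pm}\,dP\,dQ=\int{f^{\pm}}\,dR_{\bigstar}<\infty$, so for $Q$-a.e. $y$ both $\int_Xf^{\pm}(x,y)\,dP(x)$ are finite. Hence $f^y$ is $P$-integrable for $Q$-a.e. $y$, and symmetrically $f_x$ is $Q$-integrable for $P$-a.e. $x$. This is nearly separate integrability, and subtracting the two identities finishes the proof.

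I expect the main obstacle to be interpretive rather than technical. Taken literally, ``$f$ is $R_{\bigstar}$-integrable or non-negative'' must include $\mfA\bigstar\mfB$-measurability in the nonnegative case as well; without measurability the claim is false (consider Sierpi\'nski-type sets), so I would state this assumption explicitly. A second point to check carefully is the indicator step. There I must confirm that $\mfA\bigstar\mfB=\{E\tr{M}\}$ as asserted in the text, which needs $\mcM^{\bigstar}$ to be a $\sigma$-ideal and $\mfA\wh\otimes\mfB$ to be a $\sigma$-algebra. I must also confirm that completeness of $P$ and $Q$ is what makes the perturbed sections $F^y$ and $F_x$ measurable.
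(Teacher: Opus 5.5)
Your proof is correct, and its core is the same as the paper's. Both rest on the set-level identity $R_{\bigstar}(F)=R_{\dd}(F)=R^{\dd}(F)$ for $F=E\tr M\in\mfA\bigstar\mfB$. The paper simply cites the display preceding the proposition, whereas you re-derive it, using completeness of $P$ and $Q$ to make the perturbed sections $F^y$, $F_x$ measurable off a null set.

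The difference lies in the passage from sets to functions. The paper treats only a bounded $f$ that is a uniform limit of $\mfA\bigstar\mfB$-simple functions, and passes to the limit by dominated convergence. That is quick, but as written it proves only the bounded case. The unbounded $R_{\bigstar}$-integrable and non-negative cases announced in the statement are left implicit, and they would need truncation plus monotone convergence anyway.

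Your Tonelli-style route works instead with monotone convergence from below for non-negative $\mfA\bigstar\mfB$-measurable $f$, collecting the countably many exceptional null sets, and then splits $f=f^+-f^-$. This covers the full statement. It also makes explicit where the a.e. measurability of the sections and of the inner integrals comes from, which the paper glosses over.

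Your remark that measurability must be assumed in the non-negative case is right. Under CH, the classical Sierpi\'nski set gives a non-negative, nearly separately measurable indicator with iterated integrals $0$ and $1$. Note also that for non-negative $f$ with $\int f\,dR_{\bigstar}=\infty$ one only gets near separate measurability and equality in $[0,\infty]$, not near separate integrability. That weaker conclusion is exactly what your argument delivers.
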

\begin{proof}
Assume that $f$ is bounded and a uniform limit of a sequence of simple functions $f_n=\sum_{i=1}^{k_n}a_{i,n}\chi_{{}_{E_{k,n}}}$, where $E_{k,n}\in\mfA\bigstar\mfB$. For every $n\in\N$ we have the equality
\begin{align*}
&\int_{X\times{Y}}f_n\,dR_{\bigstar}=\sum_{i=1}^{k_n}a_{i,n}\int_{X\times{Y}}R_{\bigstar}(E_{k,n})\\
&=\sum_{i=1}^{k_n}a_{i,n}R_{\dd}(E_{k,n})=\int_Y\Bigl(\int_Xf_n(x,y)\,dP(x)\Bigr)\,dQ(y)\\
&=\int_X\Bigl(\int_Yf_n(x,y)\,dQ(y)\Bigr)\,dP(x)
\end{align*}
Applying the Lebesgue Dominated Convergence Theorem, we obtain the equality
\begin{align*}
&\int_{X\times{Y}}f\,dR_{\bigstar}=\lim_n \int_{X\times{Y}}f_n\,dR_{\bigstar}\\
&=\lim_n\int_Y\Bigl(\int_Xf_n(x,y)\,dP(x)\Bigr)\,dQ(y)=
\int_Y\Bigl(\int_Xf(x,y)\,dP(x)\Bigr)\,dQ(y)
\end{align*}
Similarly for reversed order of $P$ and $Q$.
\end{proof}
\begin{rem} \rm
Assume that $R=P\times{Q}$ and $\varXi=\{\xi_y:y\in{Y}\}$ is a process with measurable paths $x\to{\xi_y(x)}$. If $\varXi$ is not measurable, then applying  the Fubini theorem is impossible. In such a case  Proposition \ref{p1} may be useful.
\end{rem}

\end{document}